\newcommand{\urltilde}{\kern -.15em\lower .7ex\hbox{~}\kern .04em}  
\newcommand*\bigcdot{\mathpalette\bigcdot@{2.0}}
\newcommand*\bigcdot@[2]{\mathbin{\vcenter{\hbox{\scalebox{#2}{$\m@th#1\bullet$}}}}}
\newcommand{\Stab}{\mathop{\mathrm{Stab}}\nolimits}
\newcommand{\Res}{\mathop{\mathrm{Res}}\nolimits}
\newcommand{\Ind}{\mathop{\mathrm{Ind}}\nolimits}
\newcommand{\Rep}{\mathop{\mathrm{Rep}}\nolimits}
\begin{document}
\title*{Generalized iterated wreath products of cyclic groups and rooted trees correspondence}
\author{Mee Seong Im and Angela Wu} 
\institute{Mee Seong Im \at Department of Mathematical Sciences, United States Military Academy, West Point, NY 10996 USA  \newline 
\email{meeseongim@gmail.com}
\and Angela Wu \at Department of Mathematics, University of Chicago, Chicago, IL 60637 USA
\newline 
 \email{wu@math.uchicago.edu}}
%
%
\titlerunning{Iterated wreath products of cyclic groups and rooted trees}
\maketitle


\abstract{Consider the generalized iterated wreath product $\mathbb{Z}_{r_1}\wr \mathbb{Z}_{r_2}\wr \ldots \wr \mathbb{Z}_{r_k}$ where $r_i \in \mathbb{N}$.   
We prove that the irreducible representations for this class of groups are indexed by a certain type of rooted trees. 
This provides a Bratteli diagram for the generalized iterated wreath product, a simple recursion formula for the number of irreducible representations, and a strategy to calculate the dimension of each irreducible representation. We calculate explicitly fast Fourier transforms (FFT) for this class of groups, giving literature's fastest FFT upper bound estimate. \\  \\ 
\textbf{Keywords}: Iterated wreath products, cyclic groups, rooted trees, irreducible representations, fast Fourier transform, Bratteli diagrams. \\  \\ 
\textit{AMS Subject Classification}: Primary 20C99, 20E08; Secondary 65T50, 05E25 
}

\section{Introduction}\label{section:introduction}     
Representations of groups appear naturally in nature, more often than groups themselves. They appear in the form of a linear representation, a permutation representation, and automorphisms of an algebra, a group, a variety or scheme, or a manifold. 
For example, one can study functions on the circle $S^1$, which could be thought of as a group under addition, which form representations of $S^1$. Such functions could also be thought of as periodic functions on the set $\mathbb{R}$ of real numbers, and the decomposition of the space of functions on $S^1$ is known as the theory of Fourier series.  
One can also study the additive group $\mathbb{R}$ of real numbers acting on itself under addition. Then one may ask how the function space of $\mathbb{R}$ decompose under the action of the group of real numbers; this is the study of Fourier transform.

A cyclic group may be thought of as the set of rotational symmetries of a regular polygon and of a generalized wreath product  $\mathbb{Z}_{r_1}\wr \mathbb{Z}_{r_2}\wr \ldots \wr \mathbb{Z}_{r_k}$ as the automorphisms of a corresponding complete rooted tree generated by cyclic shifts of the children of each node.  
With applications to functions on rooted trees, pixel blurring (cf. \cite{Stankovic-Moraga-Astola}, \cite{Chang-Thesis},  \cite{Mirchandani99multiresolution-analysis}, \cite{Mirchandani99awreath}, \cite{Holmes-mathematical-foundations}, \cite{Holmes-signal-processing}), nonrigid molecules in molecular spectroscopy (cf. \cite{balasubramanian1979enumeration}, \cite{MR585739}, \cite{milot2001energy}, \cite{schnell2010understanding}), and visual information processing (cf. \cite{leyton2003generative}, \cite{borsa2015wreath}), we generalize Orellana-Orrison-Rockmore's manuscript \cite{MR2081042}. 
Denoting the iterated wreath product as $W(\vec{r}|_k):= \mathbb{Z}_{r_1}\wr \ldots \wr \mathbb{Z}_{r_k}$ (see Section~\ref{subsection:wreath-products}), 
we show that the equivalence classes of irreducible representations of the iterated wreath products $W(\vec{r}|_k)$ are indexed by classes of labels on the vertices of the complete $\vec{r}|_k$-ary trees (see Section~\ref{subsection:trees}) of height $k$ (Proposition~\ref{prop:one-to-one-equiv-irrep-orbits-labels}).  

Let $G$ be a finite group and let $V$ be a vector space over the set $\mathbb{C}$ of complex numbers. Let $GL(V)$ be the general linear group on $V$, and let $\rho:G\rightarrow GL(V)$ be a representation of $G$, i.e., $\rho$ is a group homomorphism. 
We say two representations  $\rho:G\rightarrow GL(V)$ and $\eta:G\rightarrow GL(W)$   are equivalent, and write $\rho \sim \eta$, if there exists a vector space isomorphism $f:V\rightarrow W$ such that $f\circ \rho(g)=\eta(g)\circ f$ for all $g\in G$. 
 We denote by $\widehat{G}$ the set of equivalence classes of irreducible representations of $G$.   
We say that  
$\mathcal{R}$ is a \textit{traversal} for $G$ if $\mathcal{R} := \mathcal{R}_G \subset \widehat{G}$ contains one irreducible representation for each isomorphism class in $\widehat{G}$. 
As a basic consequence of representation theory, the equality $\sum_{\rho \in \mathcal{R}} \dim(\rho)^2 = |G|$ holds, where the sum is over all irreducible representations in $\mathcal{R}$. 

We denote by $[n]:=\{ 1,2,\ldots, n\}$ the set of integers from $1$ to $n$, and we denote the set of length $\ell$ words with letters in $[n]$ by 
$$
[n]^\ell := \left\{ x_1 x_2 \cdots x_\ell: x_i \in [n] \right\}. 
$$

Now given a subgroup $H \leq G$, we write $\Ind_H^G:\Rep(H)\rightarrow \Rep(G)$ to be the induction functor from the category of representations of $H$ to the category of representations of $G$. 
That is, given a representation $\eta\in \widehat{H}$, $\eta:H\rightarrow GL(V)$, 
 we write $\Ind_H^G \eta = \mathbb{C}[G]\otimes_{\mathbb{C}[H]}V$, the {\it induced representation} of $G$ from $\eta$ with dimension $[G:H] \cdot \dim \eta$. 
We also have the dual construction to induction, which is called restriction.  Given a subgroup $H$ of $G$,
$\Res_H^G:\Rep(G)\rightarrow \Rep(H)$ is the restriction functor from the category of representations of $G$ to the category of representations of $H$, i.e., 
given a representation $\rho$ of $G$, we obtain the {\em restricted representation} $\Res_H^G\rho$ of $H$ by restricting $\rho$ to $H$. 
The induction and restriction functors are related by Frobenius reciprocity. 
We refer the reader to \cite{MR0202859} for an explicit and elegant discussion on the duality of induction and restriction.

For $x = x_1 \cdots x_{r_k} \in [h]^{r_k}$, define 
\begin{equation}\label{eqn:d_x}
d_x := \min \{ i \in \mathbb{N} : x^i = x \}, \mbox{ where } 
x^ i = (x_1 \cdots x_{r_k})^i := x_{i+1} \cdots x_{r_k} x_1 \cdots x_i.
\end{equation}
Note that $d_x | r_k$ for any $x\in [h]^{r_k}$. We write $x^G=\{x^g:g\in G \}$, 
the orbit of $x$ under $G$. In the case $G=\mathbb{Z}_r$, then $i\in \mathbb{Z}_r$ acts on $x$ by $i\cdot x=x^i$, cyclically rotating the letters in the word $x$.

 We now state our first theorem, which generalizes Theorem 2.1 in \cite{MR2081042}:  
\begin{theorem}\label{theorem:irreps-symm}
Suppose that $\mathcal{R} = \{ \rho_1,\ldots, \rho_h\}$ is a traversal for the iterated wreath product $W(\vec{r}|_{k-1})$. Let $J \subseteq [h]^{r_k}$ denote a set of $\mathbb{Z}_{r_k}$-orbit representatives of $[h]^{r_k}$  such that $ [h]^{r_k}  = \displaystyle{ \bigsqcup_{x \in J} x^{\mathbb{Z}_{r_k}}}$. Then a traversal for $W(\vec{r}|_{k})$ is given by: 
		\begin{equation}
		 \mathcal{R}_{W(\vec{r}|_k)} =  \left\{ \Ind_{W(\vec{r}|_{k-1}) \rtimes 	\mathbb{Z}_{d_x}}^{W(\vec{r}|_k)} 
		(\rho_{x_1} \otimes \cdots \otimes \rho_{x_{r_k}} \otimes \tau) : x \in J, \tau \in \widehat{\mathbb{Z}_{d_x}} \right\}. 
		\end{equation}
	
\end{theorem}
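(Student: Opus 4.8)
The plan is to run the classical little-groups method of Wigner and Mackey (Clifford theory) for the semidirect-product decomposition $W(\vec r|_k)=N\rtimes\mathbb{Z}_{r_k}$, where $N:=W(\vec r|_{k-1})^{r_k}$ is the base group (the normal subgroup of $r_k$-tuples) and a generator of $\mathbb{Z}_{r_k}$ acts on $N$ by cyclically shifting the coordinates; the subgroup appearing in the statement is the inertia group $N\rtimes\mathbb{Z}_{d_x}$, with $\mathbb{Z}_{d_x}$ read as the subgroup $\langle d_x\rangle\le\mathbb{Z}_{r_k}$.

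First I would describe $\widehat N$. Since $\mathcal R=\{\rho_1,\dots,\rho_h\}$ is a traversal for $W(\vec r|_{k-1})$, the standard classification of the irreducibles of a finite direct product shows that $\{\rho_{x_1}\otimes\cdots\otimes\rho_{x_{r_k}}:x\in[h]^{r_k}\}$ is a traversal for $N$, these external tensor products being pairwise non-isomorphic. Computing the conjugation action of a generator of $\mathbb{Z}_{r_k}$ on $\widehat N$, one sees it permutes the tensor factors cyclically and hence sends the class of $\rho_{x_1}\otimes\cdots\otimes\rho_{x_{r_k}}$ to that of the shifted word $x^{1}$; therefore the $\mathbb{Z}_{r_k}$-orbits on $\widehat N$ correspond bijectively to the $\mathbb{Z}_{r_k}$-orbits on $[h]^{r_k}$, i.e.\ to $J$. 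For $x\in J$ put $\sigma_x:=\rho_{x_1}\otimes\cdots\otimes\rho_{x_{r_k}}$; its inertia group in $W(\vec r|_k)$ is $T_x=N\rtimes S_x$ with $S_x=\{i\in\mathbb{Z}_{r_k}:x^i=x\}$, and a short gcd argument (if $x^i=x=x^j$ then $x^{\gcd(i,j)}=x$) together with $x^{d_x}=x$ identifies $S_x$ with $\langle d_x\rangle\le\mathbb{Z}_{r_k}$, a cyclic group of order $r_k/d_x$.

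The key step is to extend $\sigma_x$ to an irreducible $\widetilde\sigma_x$ of $T_x$. Since $x$ has period $d_x$ it is the $(r_k/d_x)$-fold concatenation of its primitive block $y=x_1\cdots x_{d_x}$, so $N\cong(W(\vec r|_{k-1})^{d_x})^{r_k/d_x}$, $T_x\cong W(\vec r|_{k-1})^{d_x}\wr S_x$, and $\sigma_x\cong\eta^{\otimes(r_k/d_x)}$ with $\eta:=\rho_{y_1}\otimes\cdots\otimes\rho_{y_{d_x}}$; the usual extension of a tensor power over a wreath product,
$$(g_1,\dots,g_m;\pi)\longmapsto P(\pi)\bigl(\eta(g_1)\otimes\cdots\otimes\eta(g_m)\bigr),$$
where $m=r_k/d_x$ and $P(\pi)$ is the tensor-factor permutation operator, then furnishes $\widetilde\sigma_x$ (equivalently, one may argue abstractly that an $N$-irreducible invariant under an overgroup with cyclic quotient always extends, by rescaling an intertwiner so that its $m$-th power agrees with $\sigma_x$ on $N$). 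Given $\widetilde\sigma_x$, Clifford's theorem makes $\tau\mapsto\widetilde\sigma_x\otimes\mathrm{infl}\,\tau$ (the pullback of $\tau$ along $T_x\twoheadrightarrow T_x/N\cong S_x$) a bijection from $\widehat{S_x}$ onto the irreducibles of $T_x$ lying over $\sigma_x$, and the Clifford correspondence (induction from the inertia group) makes $\Ind_{T_x}^{W(\vec r|_k)}\bigl(\widetilde\sigma_x\otimes\mathrm{infl}\,\tau\bigr)$ irreducible and shows that, as $(x,\tau)$ runs over $\bigsqcup_{x\in J}\{x\}\times\widehat{S_x}$, we obtain each element of $\widehat{W(\vec r|_k)}$ exactly once. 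Unwinding the identifications, $\widetilde\sigma_x\otimes\mathrm{infl}\,\tau$ is the representation $\rho_{x_1}\otimes\cdots\otimes\rho_{x_{r_k}}\otimes\tau$ of the statement, giving the asserted traversal; as a cross-check one verifies $\sum(\dim)^2=r_k\,|W(\vec r|_{k-1})|^{r_k}=|W(\vec r|_k)|$ using that $|x^{\mathbb{Z}_{r_k}}|=d_x$ and that $\prod_i\dim\rho_{x_i}$ is constant on each orbit.

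I expect the extension step to be the main obstacle: it is the only place where the hypothesis that the top group $\mathbb{Z}_{r_k}$ is \emph{cyclic} is genuinely used --- for a non-cyclic top group one meets a nontrivial class in $H^2$ and must pass to projective representations --- and writing the block-permutation cocycle down explicitly and checking that it is a homomorphism (that the tensor-factor permutation operators compose compatibly with the diagonal $\eta$-action) is the one calculation requiring care. The remaining ingredients --- the direct-product classification, the orbit bijection, and the two halves of the Mackey machine --- are standard, so beyond the extension the work is bookkeeping: keeping the coordinate conventions on $N$ straight and checking that distinct pairs $(x,\tau)$ produce non-isomorphic induced modules via the orbit description.
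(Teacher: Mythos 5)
Your argument is correct and follows essentially the same route as the paper: the Wigner--Mackey/Clifford little-groups method applied to $W(\vec r|_k)=W(\vec r|_{k-1})^{r_k}\rtimes\mathbb{Z}_{r_k}$, with the tensor-product description of $\widehat{N}$, the identification of inertia groups via cyclic shifts of words, and the Clifford correspondence. You additionally make explicit the one step the paper's proof takes for granted --- the extension of $\sigma_x$ to its inertia group via the block-permutation operators (equivalently, the vanishing of the relevant $H^2$ for a cyclic quotient) --- and you correctly resolve the paper's ambiguous use of $\mathbb{Z}_{d_x}$ by reading it as the stabilizer $\langle d_x\rangle\le\mathbb{Z}_{r_k}$ of order $r_k/d_x$, which is the reading that makes the $\sum\dim^2=|W(\vec r|_k)|$ check come out right.
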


 Now, an efficient algorithm for applying a discrete Fourier transform is called a fast Fourier transform (FFT). 
 For a finite group $G$, we denote by $T(G)$ the maximum number of computations required to compute $\{ \widehat{f}(\rho) : \rho \in \mathcal{R}_G \}$ over all complex-valued functions $f: G \rightarrow \mathbb{C}$ on $G$, where $\widehat{f}$ is the Fourier transform of $f$ at $\rho$, i.e., it is the matrix 
 \[ 
 \widehat{f}(\rho) = \sum_{g\in G} f(g)\rho(g). 
 \] 

For the class of finite abelian groups $G$, the Cooley-Tukey algorithm given in \cite{MR0178586} and \cite{diaconis1980average}, combined with techniques provided in  \cite{Rader} and \cite{MR0243724}, give an order of $O(|G| \log |G|)$ operation bound on the FFT computation time, where the $O$-notation is some universal constant. 
Rockmore in   \cite{MR1053228} provides the fastest algorithm to date in literature for abelian group extensions: 

\begin{theorem}[Lemma 5 and Theorem 4, \cite{MR1053228}]\label{thm:Rockmore-FFT}
Let $ K \trianglelefteq G$ be a normal subgroup of $G$ and assume $G/K$ is abelian.	Let $\rho \in \widehat{G}$. Then there exists a subgroup $H$ with $K \leq H \leq G$ and $\widetilde{\eta} \in \widehat{H}$ such that 
	\begin{itemize}
	\item $\eta = \widetilde{\eta} |_K$ is irreducible, 
	\item $\Ind_H^G \widetilde{\eta} = \rho$, 
	\item $H:= \{ g \in G: \rho^{(g)} \sim \rho \}$, the inertia group of $\rho$ in $G$, where $\rho^{(g)}(h)=\rho(g^{-1}hg)$ for all $h\in G$, and 
	\item if $G = \bigsqcup \limits_{i \in [G:H]} s_i H$, then $\rho = \widetilde{\eta }^{(1)} \otimes \widetilde{\eta}^{(s_2)} \otimes \cdots \otimes \widetilde{\eta}^{(s_{[G:H]})}$, where $\widetilde{\eta}^{(s)}(h) = \widetilde{\eta}(s^{-1} h s)$. 
	\end{itemize}
\end{theorem}

 The representation $\rho^{(g)}$ is called a {\em conjugate representation} of $\rho$. 
By Theorem~\ref{thm:Rockmore-FFT}, we see that it suffices to take a traversal $\mathcal{R}_K$ of $K$ in order to find a complete traversal of $G$. Then we need to construct the set of extensions of $\eta$ to its inertia group $H_\eta$ for each representation $\eta \in \mathcal{R}_K$. Finally, we need to build up the induced representation of $G$ from each extension. 
Applying \cite{MR1053228} to give an upper bound on the number of operations needed to compute a fast Fourier transform of an iterated wreath product, 
 we present an explicit running time of fast Fourier transforms for $W(\vec{r}|_k)$, thus proving a tighter upper bound estimate than Theorem 1 in \cite{MR1053228}:

\begin{theorem}\label{theorem:TG-upper-bound}
For $f: K \wr \mathbb{Z}_r \rightarrow \mathbb{C}$, we have 
\begin{dmath}
T(K \wr \mathbb{Z}_r) =  
	r \cdot T(K^r) +	\sum\limits_{\eta \in E} 	m \left( r \dim(\eta)^\alpha + \dim(\eta)^2 O\left(r \log \frac{r}{m}\right) \right) + r\: m \cdot \dim(\eta)^\alpha.   
\end{dmath} 
\end{theorem}
 
Using Bratteli diagrams (see Section~\ref{subsection:Bratteli-diagrams}), we calculate the irreducible representations iteratively (see Section~\ref{section:FFT}), thus proving Theorem~\ref{theorem:TG-upper-bound}.

 \subsection{Summary of the sections} 
 In Section~\ref{section:background-notation}, we provide some background and notation. 
 We begin by defining wreath products of cyclic groups in Section~\ref{subsection:wreath-products}, 
 give an introduction to Clifford theory in Section~\ref{subsection:Clifford}, give a construction of $\vec{r}$-trees in Section~\ref{subsection:trees}, and then define Bratteli diagrams in Section~\ref{subsection:Bratteli-diagrams}. 
   
 In Section~\ref{section:irrep-iterated-wreath-products}, we prove Theorem~\ref{theorem:irreps-symm} and then give the number of irreducible representations for the iterated wreath product $\mathbb{Z}_{r_1}\wr \ldots \wr \mathbb{Z}_{r_k}$ in Theorem~\ref{theorem:irreducible-rep-recursion}. 
 In Section~\ref{section:branching-diagram-generalized-cyclic-rooted-tree-correspondence}, we prove the one-to-one correspondence between equivalence classes of irreducible representations of the generalized wreath product and orbits of compatible $\vec{r}|_{k}$-labels (Proposition~\ref{prop:one-to-one-equiv-irrep-orbits-labels}), give the number of $\vec r|_k$-trees of height $k$ in Corollary~\ref{cor:number-trees-height-k-recursion-Euler}, and write the dimension of an irreducible representation of an iterated wreath product of cyclic groups in terms of companion trees in Proposition~\ref{prop:dimension-rep-companion-tree}.
 In Section~\ref{section:FFT}, we prove Theorem~\ref{theorem:TG-upper-bound}, 
 generalizing Theorem 1 in \cite{MR1053228} by giving an explicit computation, and finally, in Section~\ref{section:conclusion-future}, we conclude by providing an open problem. 

  \subsection{Acknowledgment}    
The authors acknowledge Mathematics Research Communities for providing an exceptional working environment at Snowbird, Utah. They would like to thank Michael Orrison for helpful discussions, and the referees for extremely useful remarks on this manuscript. 
This manuscript was written during MSI's visit to the University of Chicago in 2014. She thanks their hospitality.  
   
\section{Background}\label{section:background-notation}
\subsection{Wreath products}\label{subsection:wreath-products}
We refer to Section 1.1 in \cite{MR2081042} for a beautiful exposition with illustrative examples about the construction of the wreath product $G\wr H$ of a finite group $G$ with a subgroup $H$ of $S_n$, which is summarized as follows. We define an action of $H$ on $G^n=G\times \cdots \times G$ by if $\pi\in H$ and $a=(a_1,a_2,\ldots, a_n)\in G^n$, then 
$\pi\cdot a := a^{\pi}=(a_{\pi^{-1}(1)},a_{\pi^{-1}(2)},\ldots, a_{\pi^{-1}(n)})$. 
The wreath product $G\wr H$ is defined to be $G^n\times H$ as a set, with multiplication given by 
\begin{equation}\label{eqn:multn-wreath-product}
(a;\pi)(b;\sigma) = (ab^{\pi};\pi\sigma). 
\end{equation}

Throughout this paper, we will fix $\vec{r} = (r_1,r_2,r_3,\ldots) \in \mathbb{N}^\omega$, a positive integral vector. We denote by $\vec{r}|_k :=(r_1,r_2,\ldots, r_k)$ the $k$-length vector found by truncating $\vec{r}$.   
\begin{definition}\label{definition:chain-of-subgroups}  
We define the {\em (generalized) 
$k$-th $\vec{r}$-cyclic wreath product} $W(\vec{r}|_k)$ recursively by:
	\begin{equation*}
		W(\vec{r}|_0) = \{ 0\} \text{ and } W(\vec{r}|_k) = W(\vec{r}|_{k-1}) \wr \mathbb{Z}_{r_k}.  
	\end{equation*}
\end{definition}

Note that multiplication for the wreath product $W(\vec{r}|_k)$ is defined recursively using \eqref{eqn:multn-wreath-product}. 

\begin{example}\label{example:iterated-wr-prod}
We have $W(\vec{r}|_1) = \mathbb{Z}_{r_1}$, $W(\vec{r}|_2) = \mathbb{Z}_{r_1} \wr \mathbb{Z}_{r_2}$, and $W(\vec{r}|_k) = \mathbb{Z}_{r_1} \wr \ldots \wr \mathbb{Z}_{r_k}$. 
\end{example}
Throughout this manuscript, we will be considering the chain of groups given in Definition~\ref{definition:chain-of-subgroups}.

\subsection{Clifford theory}\label{subsection:Clifford}
The following \cite{MR1038525}, \cite{MR1060103}, and \cite{MR2760311} contain an extensive background on Clifford theory, which allows one to recursively construct the irreducible representations of a group. 
In this manuscript, 
we will give a brief overview of the main results of Clifford theory. 

Let $G$ be a finite group and let $K$ be a normal subgroup of $G$. 
Then $G$ acts on the set of inequivalent irreducible representations of $K$. For any irreducible representation $\sigma$ of $K$, let $\Delta(\sigma)$ denote its orbit under this action, i.e., inequivalent conjugates of $\sigma$. 
Let $\Stab_G(\sigma)$ be the isotropy subgroup of $\sigma$ under the $G$-action. 
\begin{theorem}[Clifford theory]\label{thm:Clifford-theory}
Let $K$ be a normal subgroup of $G$. 
\begin{enumerate}
\item(\cite{MR0202859}, Theorem 10)\label{item:Clifford-one} 
If $\sigma$ be a representation of $K$, then 
\[ 
\Res_K^G \Ind_K^G \sigma = [\Stab(\sigma):K]\cdot \Delta(\sigma). 
\] 
\item(\cite{MR0202859}, Theorem 14)\label{item:Clifford-two} 
If $\rho:G\rightarrow GL(V)$ is an irreducible representation of $G$, then 
\[ 
\Res_K^G \rho =  \frac{d_{\rho}}{[G:\Stab(\sigma)]d_{\sigma}}\cdot \Delta(\sigma), 
\] 
where $\sigma$ is any irreducible representation of $K$ which appears in $\Res_K^G \rho$, and $d_{\rho}$ is the dimension of the vector space $V$.  
\end{enumerate}
\end{theorem}
We also call $d_{\rho}$ the {\em degree} of $\rho$. 

\subsection{Rooted trees of a fixed height}\label{subsection:trees}

We define $\vec{r}$-trees, a generalization of the $r$-trees in Section 3 of \cite{MR2081042}. 
A {\em rooted tree} is a connected simple graph with no cycles and with a distinguished vertex called the {\em root}. 
We say a node $v$ is in the {\em $j$-th layer} of a rooted tree if it is at distance $j$ from the root. 

\begin{definition}
We define the complete $\vec{r}$-tree, denoted by $T(\vec{r}|_k)$, of height $k$, or $\vec{r}|_k$-tree, recursively as follows: let $T(r_1)$ be the 1-layer tree consisting of a root node only. Let $T(\vec{r}|_2)$ consist of a root with $r_2$ children. Let $T(\vec{r}|_k)$ consist of a root node with $r_k$ children, 
with each the root of a copy of the $(k-1)$-layer tree $T(\vec{r}|_{k-1})$, which yields a tree with $k$ levels of nodes.  
\end{definition}

\begin{example}
The tree $T(r_1)$ is given by $\bullet$ and $T(r_1,r_2)$ with $r_2$ leaves is given by 
\tiny 
\[ 
\xymatrix@-1pc{ 
& & \ar@{-}[lldd] \ar@{-}[ldd]\bigcdot \ar@{-}[rdd] \ar@{-}[rrdd]& &  \\ 
& &  & &  \\ 
\stackrel{1}{\bigcdot} & \stackrel{2}{\bigcdot} & \ldots & \stackrel{r_2-1}{\bigcdot} &\stackrel{r_2.}{\bigcdot} \\ 
}
\] 
\end{example}

\begin{example}\label{example:complete-tree-3-layers-nodes}
Writing $\vec{r}|_3= (r_1,r_2,r_3)$, 
the following is the complete tree $T(\vec{r}|_3)$ of height $3$ with $3$ levels of nodes: 
\tiny 
\[
\xymatrix@-1pc{
& & & & & & & \ar@{-}[llllldd]  \ar@{-}[ldd]\bigcdot  \ar@{-}[rrrrdd]& &  &  & & & &  \\ 
& & & & & & & &  & & & & & & \\ 
& & \ar@{-}[lldd] \ar@{-}[ldd] \stackrel{1}{\bigcdot}\ar@{-}[rdd] & & & &\ar@{-}[lldd] \ar@{-}[ldd]\stackrel{2}{\bigcdot} \ar@{-}[rdd]& & & \cdots&    &  \ar@{-}[lldd] \ar@{-}[ldd]  \stackrel{r_3}{\bigcdot} \ar@{-}[rdd] & & &  \\ 
& &  & & & & &  & & & & & & & \\
\stackrel{1}{\bigcdot} & \stackrel{2}{\bigcdot} & \cdots &\stackrel{r_2}{\bigcdot} &  \stackrel{1}{\bigcdot}&  \stackrel{2}{\bigcdot} & \cdots & \stackrel{r_2}{\bigcdot}&   & \stackrel{1}{\bigcdot} & \stackrel{2}{\bigcdot} & \cdots &\stackrel{r_2.}{\bigcdot}  & &  &\\
}
\] 
\end{example}

\begin{example}\label{ex:4-layers-node}
The complete tree $T(\vec{r}|_4)$ of height $4$ with $4$ levels of nodes is: 
\tiny 
\[ 
\xymatrix@-1.5pc{
& & & & & & & & & \ar@{-}[dllll]   \bigcdot   \ar@{-}[drrrr]& & & &  & & & &    \\ 
& & & & & \ar@{-}[dlll] \stackrel{1}{\bigcdot} \ar@{-}[dr] & &  & &\cdots &  & & & \color{magenta}\ar@{..}@[magenta][dlll]\stackrel{r_4}{\bigcdot} \ar@{..}@[magenta][dr]\color{black}&  &  & &   \\ 
& & \ar@{-}[dll] \ar@{-}[dl]\stackrel{1}{\bigcdot}\ar@{-}[dr]&  &\ldots  & &  \ar@{-}[dll] \ar@{-}[dl]\stackrel{r_3}{\bigcdot}\ar@{-}[dr] &  & &  &  \ar@{..}@[magenta][dll] \ar@{..}@[magenta][dl]\color{magenta}\stackrel{1}{\bigcdot}\color{black} \ar@{..}@[magenta][dr]& & &\color{magenta}\ldots &  \ar@{..}@[magenta][dll] \ar@{..}@[magenta][dl] \color{magenta}\stackrel{r_3}{\bigcdot} \color{black}\ar@{..}@[magenta][dr]& &  &   \\ 
 \stackrel{1}{\bigcdot}& \stackrel{2}{\bigcdot}& \ldots & \stackrel{r_2}{\bigcdot} & \stackrel{1}{\bigcdot}& \stackrel{2}{\bigcdot}& \ldots &\stackrel{r_2}{\bigcdot}& \color{magenta}  \stackrel{1}{\bigcdot}\color{black}&\color{magenta} \stackrel{2}{\bigcdot}&\color{magenta} \ldots & \color{magenta}\stackrel{r_2}{\bigcdot} & \color{magenta}    \stackrel{1}{\bigcdot}&\color{magenta} \stackrel{2}{\bigcdot}& \color{magenta}\ldots &\color{magenta}\stackrel{r_2.}{\bigcdot} & &   \\
}
\] 
\end{example}

Notice that $T(\vec{r}|_k)$ has $\displaystyle{\prod_{i=2}^k } r_i$ leaves, with 
$\displaystyle{\prod_{i=k-j+1}^k r_i}$ nodes in the $j$-th layer. The subtree $T_v$ of $T=T(\vec{r}|_k)$ is the tree rooted at $v$ consisting of all the children and descendants of $v$. We call $T_v$ a maximal subtree of $T$ if $v$ is a child of the root, or equivalently if $v$ is in the second layer. 

\begin{example}
In Example~\ref{ex:4-layers-node}, the subtree indicated by dotted edges in magenta is a maximal subtree of  $T(\vec{r}|_4)$. 
\end{example}

\begin{definition}
Let  $V_{T(\vec{r}|_k)}$ be the set of vertices of the tree $T(\vec{r}|_k)$. 
	An {\em $\vec{r}|_k$-label} is a function $\phi: V_{T(\vec{r}|_k)} \rightarrow \mathbb{N}$ on the vertices of the tree $T(\vec{r}|_k)$ that assigns a natural number to each vertex. 
	A $\vec{r}|_k$-label is {\em compatible} if it satisfies the following: 
	\begin{enumerate}
	\item for $k=1$: $\phi( \text{root node}) \in [r_1]$,  and 
	\item for $k>1$:  
	\begin{enumerate}
	\item given any child of the root $v$, $\phi_v := \phi|_{T_v}$ is a compatible $\vec{r}|_{k-1}$-label, and 
	\item $\phi(\text{root node}) \in [d]$, where 	$\mathbb{Z}_d$ is the stabilizer of the action of $\mathbb{Z}_{r_k}$ on equivalence classes of $\{(\phi_v): v \text{ is a child of the root}\}$, 
	\end{enumerate}
	\end{enumerate}	
	where $\phi_v$ denotes the restriction of $\phi$ to the maximal subtree $T_v$. 
\end{definition}

We say that two compatible labels $\phi$ and $\psi$ of $T(\vec{r}|_k)$ are {\em equivalent}, and write $\phi \sim \psi$, if they are in the same orbit under the action of $W(\vec{r}|_k)$, or equivalently, if $\psi^{W(\vec{r}|_k)} = \phi^{W(\vec{r}|_k)}$, where $\psi^g(v) := \psi(v^g)$.

\subsection{Bratteli diagrams}\label{subsection:Bratteli-diagrams}

We refer to Section 4.1 in \cite{MR2081042} or to \cite{maslen2001cooley} for a detailed discussion on Bratteli diagrams. 

A {\em Bratteli diagram} $B$ is a weighted graph, which can be described by a set of vertices from a disjoint collection of sets $B_m$, $m\geq 0$, and edges that connect vertices in $B_m$ to vertices in $B_{m+1}$. Assuming that the set $B_0$ contains a unique vertex, the edges are labeled by positive integer weights. The set $B_m$ is the set of vertices at level $m$. If a vertex  $T_1\in B_m$ is connected to a vertex $T_2\in B_{m+1}$, then we write $T_1\leq T_2$. 

Given a tower of subgroups $\langle 1 \rangle=G_0\leq G_1 \leq \ldots \leq G_n$, the corresponding Bratteli diagram has vertices of set $B_i$ labeling the irreducible representations of $G_i$. 
If $\rho$ and $\eta$ are irreducible representations of $G_i$ and $G_{i-1}$, respectively, then the corresponding vertices are connected by an edge weighted by the multiplicity of $\eta$ in $\rho$ when restricted to $G_{i-1}$. 

\begin{example}\label{example:Bratteli-diagram}
The Hasse diagram of the partially order set with a labelling of the edges is the Bratteli diagram of the iterated wreath product of cyclic groups (see Section 4.1 in \cite{MR2081042} for an illustrated example for the $n$-fold iterated wreath product of $\mathbb{Z}_r$). 
\end{example}

\section{Irreducible representations of iterated wreath products}\label{section:irrep-iterated-wreath-products}

We will now prove Theorem~\ref{theorem:irreps-symm}.

\begin{proof}
First we note that $\mathcal{R}^{r_k} := \{ \rho_{x_1} \otimes \rho_{x_2} \otimes \cdots \otimes \rho_{x_{r_k}} : x \in [h]^{r_k} \}$ is a traversal for $W(\vec{r}|_{k-1})^{r_k}$. 
Consider the action of $\mathbb{Z}_{r_k}$ on $\mathcal{R}^{r_k}$ by its action on the indices, indexed by $[h]^{r_k}$. This is isomorphic to the action of $W(\vec{r}|_k) = W(\vec{r}|_{k-1})^{r_k} \rtimes \mathbb{Z}_{r_k}$ on $\widehat{W}(\vec{r}|_{k-1})^{r_k}$ by conjugation. 

Fix some $\sigma_x := \rho_{x_1} \otimes \cdots\otimes \rho_{x_{r_k}} \in \mathcal{R}^{r_k}$ (which corresponds to the word
 $x = x_1 \cdots x_{r_k} \in [h]^{r_k}$). The stabilizer of $x$ under the cyclic action of $\mathbb{Z}_{r_k}$ is a subgroup corresponding to 
 $$
 \mathbb{Z}_{d_x}  \cong \left(\frac{r_k}{d_x}\mathbb{Z}\right) \bigg/\left(r_k \mathbb{Z}\right)
 $$ 
 for some $d_x| r_k$. Notice that 
$W(\vec{r}|_{k-1})^{r_k} \trianglelefteq W(\vec{r}|_k)$.  In the language of Clifford theory, the inertia group for $\sigma_x$ is given by
	\begin{equation}
	I= I_{\sigma_x} = W(\vec{r}|_{k-1})^{r_k} \rtimes \mathbb{Z}_{d_x}. 
	\end{equation}
Also notice the inclusion $W(\vec{r}|_{k-1})^{r_k} \leq I \leq W(\vec{r}|_k)$ of a chain of subgroups. 
For $H\leq G$ and $\tau \in \widehat{H}$, denote 
\begin{equation}
\widehat{G}(\tau) = \left\{ \theta \in \widehat{G} : \tau \leq \Res_H^G \theta \right\}. 
\end{equation}  
In applying Clifford theory, we find that 
$\widehat{I}(\sigma) =  \{ \sigma \otimes \tau : \tau \in \widehat{\mathbb{Z}_d} \}$. More importantly, 
\begin{equation}
	\widehat{W}(\vec{r}|_k)(\sigma) =	\left\{ 
	\Ind_{W(\vec{r}|_{k-1})^{r_k}\rtimes \mathbb{Z}_{d_{\sigma}}}^{W(\vec{r}|_k)} \sigma \otimes \tau : \tau \in \widehat{\mathbb{Z}_d} \right\}
	\end{equation}
In addition, $\widehat{W}(\vec{r}|_k) 
= \displaystyle{ \bigcup_{\sigma \in \widehat{W}(\vec{r}|_{k-1})^{r_k}}} 
\widehat{W}(\vec{r}|_k)(\sigma)$. Also, if $\theta \in \widehat{W}(\vec{r}|_k)(\sigma)$ and $ \theta \in \widehat{W}(\vec{r}|_k)(\sigma')$ and $I^G_\sigma = I^G_{\sigma'}$, then there exists $g \in W(\vec{r}|_k)$ such that $\sigma = {\sigma'}^g$. 
The result follows. 
		\end{proof}

\begin{corollary}
For a particular $\sigma =  \rho_{x_1} \otimes \cdots \otimes \rho_{x_{r_k}}  \in \widehat{W}(\vec{r}|_{k-1})^{r_k}$ and $\tau \in \widehat{\mathbb{Z}_{d_x}}$, let $I$ be the inertia group of $\sigma$. 
Then we have 
	\begin{equation}
	\Res_ {W(\vec{r}|_{k-1})^{r_k}}^{W(\vec{r}|_k)} \left( \Ind_{ W(\vec{r}|_{k-1})^{r_k} \rtimes \mathbb{Z}_{d_x}}^ {W(\vec{r}|_k)} \sigma \otimes \tau \right) = \sigma \oplus \sigma^1 \oplus \ldots \oplus \sigma^{r/d_x} .
\end{equation}
	 
\end{corollary}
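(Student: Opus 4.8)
The plan is to compute the left-hand side with Mackey's restriction-of-induction formula, using heavily that $N := W(\vec{r}|_{k-1})^{r_k}$ is \emph{normal} in $G := W(\vec{r}|_{k})$ and lies inside the inertia group $I := W(\vec{r}|_{k-1})^{r_k}\rtimes\mathbb{Z}_{d_x}$, so that $N\trianglelefteq G$ and $N\le I\le G$. For arbitrary $N,I\le G$ and $\psi\in\widehat{I}$ one has
\[
\Res_N^G\Ind_I^G\psi \;\cong\; \bigoplus_{g\in N\backslash G/I}\Ind_{N\cap {}^{g}I}^{N}\,{}^{g}\!\bigl(\Res_{N\cap I}^{I}\psi\bigr),
\]
and when $N\trianglelefteq G$ with $N\le I$ this collapses: for every $g$ we have ${}^{g}I\supseteq {}^{g}N=N$, hence $N\cap {}^{g}I=N$; and $NgI=gNI=gI$, so the double cosets $N\backslash G/I$ are simply the left cosets $G/I$, of cardinality $m:=[G:I]$ (which equals $r_k/d_x$ in the setup of Theorem~\ref{theorem:irreps-symm}). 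Therefore
\[
\Res_N^G\Ind_I^G\psi \;\cong\; \bigoplus_{gI\in G/I}{}^{g}\!\bigl(\Res_N^{I}\psi\bigr),
\]
and the corollary reduces to identifying $\Res_N^I(\sigma\otimes\tau)$, choosing a transversal of $G/I$, and matching the conjugates ${}^{g}\sigma$ with the $\sigma^{j}$.

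First I would unwind $\sigma\otimes\tau\in\widehat{I}$. By the Clifford-theoretic description used to prove Theorem~\ref{theorem:irreps-symm}, $\sigma$ extends to a representation of $I$ — indeed $\mathbb{Z}_{d_x}$ stabilises $\sigma$, so it permutes the tensor factors of $\sigma=\rho_{x_1}\otimes\cdots\otimes\rho_{x_{r_k}}$ among equal ones and the extension is canonical — and $\sigma\otimes\tau$ is that extension tensored with the inflation to $I$ of the one-dimensional character $\tau$ of the quotient $I/N\cong\mathbb{Z}_{d_x}$. Restricting to $N$ discards the twist (which is trivial on $N$) and returns $\sigma$, so $\Res_N^I(\sigma\otimes\tau)\cong\sigma$. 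Next, writing $\zeta$ for the image in $G/N\cong\mathbb{Z}_{r_k}$ of the distinguished generator, a transversal of $G/I$ is $\{1,\zeta,\dots,\zeta^{m-1}\}$ with $m=[G:I]=r_k/d_x$, and conjugating $\sigma$ by $\zeta^{j}$ cyclically shifts its tensor factors by $j$, which is exactly the representation $\sigma^{j}$ in the notation fixed just before Theorem~\ref{theorem:irreps-symm}. Substituting into the collapsed Mackey formula gives
\[
\Res_N^G\Ind_I^G(\sigma\otimes\tau) \;\cong\; \bigoplus_{j=0}^{m-1}\sigma^{j} \;=\; \sigma\oplus\sigma^{1}\oplus\cdots\oplus\sigma^{r_k/d_x-1},
\]
i.e.\ the asserted identity (with $\sigma=\sigma^{0}$, the superscripts read modulo the period $r_k/d_x$, so that $\sigma^{r_k/d_x}=\sigma$).

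As a consistency check, $\dim\Ind_I^G(\sigma\otimes\tau)=[G:I]\dim\sigma=(r_k/d_x)\dim\sigma$ matches $\sum_{j=0}^{r_k/d_x-1}\dim\sigma^{j}$ since conjugation preserves dimension. The only step I expect to need genuine care is the first one: one must pin down precisely what representation of $I$ the symbol $\sigma\otimes\tau$ denotes — namely $\sigma$ extended to $I$ and then twisted by a character trivial on $N$ — since this is exactly what forces each conjugate $\sigma^{j}$ to appear with multiplicity one, rather than with a multiplicity coming from $\dim\tau$ or from a nontrivial action on the underlying $N$-module. Once that is in hand, together with the normality of $N$ collapsing the Mackey sum, the remaining transversal bookkeeping is routine. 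One could instead avoid Mackey and compute each multiplicity $\bigl\langle\Res_N^G\Ind_I^G(\sigma\otimes\tau),\,\sigma^{j}\bigr\rangle_N$ directly via Frobenius reciprocity and the orbit structure of the $\sigma^{j}$ under $\mathbb{Z}_{r_k}$, but the double-coset argument is shorter.
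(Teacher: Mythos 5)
Your proof is correct, and it supplies exactly the argument the paper leaves implicit behind the phrase ``application of Clifford theory'': the Mackey restriction-of-induction formula collapses over the normal subgroup $N=W(\vec{r}|_{k-1})^{r_k}$, the twist by $\tau$ dies on $N$ because $\tau$ is a one-dimensional character inflated from $I/N\cong\mathbb{Z}_{d_x}$, and the transversal $\{1,\zeta,\dots,\zeta^{r_k/d_x-1}\}$ produces the cyclic shifts $\sigma^{j}$. You are also right that the correct count is $[G:I]=r_k/d_x$ summands, so the displayed sum should terminate at $\sigma^{r_k/d_x-1}$; your dimension check confirms this and your reading of the paper's last term as a repetition of $\sigma^{0}$ is the charitable fix.
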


\begin{proof}
This follows from Theorem~\ref{theorem:irreps-symm} and an application of Clifford theory. 
\end{proof}

\subsection{Number of irreducible representations}\label{subsection:number-of-irrep}

Following the exact same argument for Theorem 2.2 in \cite{MR2081042}, we have the following recursion for the number of irreducible representations for $W(\vec{r}|_{k})$.
  
\begin{theorem}\label{theorem:irreducible-rep-recursion}
 
The number $M(\vec{r}|_{k} )$ of irreducible representations of $W(\vec{r}|_k)$ satisfies the recursion
  
\begin{equation}
M(\vec{r}|_k) = \frac{1}{r_k} \sum_{d|r_k} f(d) d^2 = \frac{1}{r_k} \sum_{d | c| r_k} \mu(c/d) M\left(\vec{r}|_{k-1} \right)^{r_k/c} d^2, 
\end{equation}
 where $M(\vec{r}|_1)=r_1$ and $\mu(n)$ is the Euler number for a natural number $n \in \mathbb{N}$. 
\end{theorem}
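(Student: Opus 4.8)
The plan is to derive the recursion directly from the traversal description in Theorem~\ref{theorem:irreps-symm}. By that theorem, the irreducible representations of $W(\vec{r}|_k)$ are indexed by pairs $(x,\tau)$ where $x$ ranges over a set $J$ of $\mathbb{Z}_{r_k}$-orbit representatives of $[h]^{r_k}$ (with $h = M(\vec{r}|_{k-1})$) and $\tau \in \widehat{\mathbb{Z}_{d_x}}$. Since $|\widehat{\mathbb{Z}_{d_x}}| = d_x$, we get $M(\vec{r}|_k) = \sum_{x \in J} d_x$. The first step is to rewrite this sum over orbit representatives as a sum over the full set $[h]^{r_k}$: each orbit of a word $x$ with period $d_x$ has exactly $d_x$ elements, so $\sum_{x \in J} d_x = \sum_{x \in J} \frac{1}{d_x}\cdot d_x^2 = \frac{1}{r_k}\sum_{x \in [h]^{r_k}} d_x$ — wait, more carefully: $\sum_{x\in J} d_x = \sum_{x \in J}\frac{|x^{\mathbb{Z}_{r_k}}|}{1}$ is not quite it; instead group by orbit size. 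Let me restate: for each $x \in J$ the orbit $x^{\mathbb{Z}_{r_k}}$ has size $d_x$, and every element $y$ of that orbit satisfies $d_y = d_x$, so $\sum_{x\in J} d_x = \sum_{x \in J} \sum_{y \in x^{\mathbb{Z}_{r_k}}} \frac{d_x}{d_x} \cdot \frac{d_x}{|x^{\mathbb{Z}_{r_k}}|} $ — the clean statement is $\sum_{x \in J} d_x = \sum_{d | r_k} d \cdot (\text{number of orbits of size } d)$, and the number of words of period exactly $d$ is $d$ times the number of such orbits, so if $f(d)$ denotes the number of $x \in [h]^{r_k}$ with $d_x = d$, then (number of orbits of size $d$) $= f(d)/d$ and hence $M(\vec{r}|_k) = \sum_{d|r_k} d \cdot \frac{f(d)}{d}\cdot d = \frac{1}{r_k}\sum_{d|r_k} f(d) d^2$ after checking the arithmetic; this establishes the first equality.

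The second step is to compute $f(d)$, the number of words $x \in [h]^{r_k}$ of least period exactly $d$. The number of words of period dividing $c$ (i.e., fixed by the cyclic shift by $c$, equivalently built by repeating a block of length $c$) is $h^{r_k/c} = M(\vec{r}|_{k-1})^{r_k/c}$ for each $c \mid r_k$. By Möbius inversion on the divisor lattice of $r_k$, the number with least period exactly $d$ is $f(d) = \sum_{c : d \mid c \mid r_k} \mu(c/d)\, h^{r_k/c}$. Substituting this into the first expression gives
\begin{equation*}
M(\vec{r}|_k) = \frac{1}{r_k}\sum_{d \mid r_k} d^2 \sum_{c : d \mid c \mid r_k} \mu(c/d)\, M(\vec{r}|_{k-1})^{r_k/c} = \frac{1}{r_k} \sum_{d \mid c \mid r_k} \mu(c/d)\, M(\vec{r}|_{k-1})^{r_k/c}\, d^2,
\end{equation*}
which is the second equality. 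The base case $M(\vec{r}|_1) = r_1$ is immediate since $W(\vec{r}|_1) = \mathbb{Z}_{r_1}$ is abelian with $r_1$ one-dimensional irreducibles.

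I do not expect a serious obstacle here; the result is essentially a bookkeeping consequence of Theorem~\ref{theorem:irreps-symm} combined with the standard count of periodic words via Möbius inversion. The one point requiring care is the passage from the sum $\sum_{x \in J} d_x$ over orbit representatives to the sum $\frac{1}{r_k}\sum_{d \mid r_k} f(d)d^2$: one must verify that an orbit $x^{\mathbb{Z}_{r_k}}$ indeed has exactly $d_x$ elements (the stabilizer being $\mathbb{Z}_{r_k/d_x}$ inside $\mathbb{Z}_{r_k}$, consistent with the inertia-group computation in the proof of Theorem~\ref{theorem:irreps-symm}) and that $d_y = d_x$ is constant on the orbit, so that counting $\sum_x d_x$ by orbit size is legitimate. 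Organizing the divisor double-sum $\sum_{d \mid r_k} \sum_{c : d \mid c \mid r_k}$ into the single index set $\{(d,c) : d \mid c \mid r_k\}$ is routine. The factor $1/r_k$ should be double-checked against the identity $\sum_{d \mid r_k} f(d) = h^{r_k}$ and $\sum_{d\mid r_k} f(d)/d \cdot d = \sum f(d)$, reconciling with $r_k M(\vec{r}|_k) = \sum_{d|r_k} f(d) d^2$; I would sanity-check the whole formula on the small case $r_k = 2$, where $f(1) = h$, $f(2) = h^2 - h$, giving $M = \frac{1}{2}(h + 4(h^2-h)) = \frac{1}{2}(4h^2 - 3h)$, and independently counting orbits of $[h]^2$: there are $h$ fixed words contributing $1$ each and $(h^2-h)/2$ two-element orbits contributing $2$ each, total $h + (h^2 - h) = h^2$ — this does not match, so the correct reading must be that $M(\vec{r}|_k) = \sum_{x \in J} d_x$ genuinely equals $h + (h^2-h) = h^2$ in this case and the formula $\frac{1}{r_k}\sum f(d) d^2$ must be re-derived; indeed $\frac{1}{2}(1\cdot h + 4(h^2-h)) \ne h^2$, whereas $\sum_{x\in J} d_x$ over the $\frac{h^2+h}{2}$ orbits is $h\cdot 1 + \frac{h^2-h}{2}\cdot 2 = h^2$, so the right identity is $r_k M(\vec{r}|_k) = \sum_{d|r_k}\frac{f(d)}{d}\cdot d \cdot \text{(orbit size }d) \cdot \ldots$; pinning down this constant precisely is the only genuinely fiddly part, and I would resolve it by carefully tracking that $\sum_{x\in J} d_x = \sum_{d\mid r_k} d\cdot\frac{f(d)}{d}$ and then matching to the stated closed form, possibly absorbing a reindexing of which variable carries the $d^2$.
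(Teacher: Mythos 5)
Your overall strategy---count the pairs $(x,\tau)$ produced by Theorem~\ref{theorem:irreps-symm}, organize the count by orbit, and then apply M\"obius inversion to count words of a given period---is the right one, but the execution contains a genuine error that your own sanity check at $r_k=2$ exposes and that you never resolve. The number of characters $\tau$ attached to an orbit representative $x$ is the number of irreducible characters of the \emph{stabilizer} of $x$ in $\mathbb{Z}_{r_k}$, i.e.\ $r_k/p_x$ where $p_x$ is the least period of $x$ (which equals the orbit size), not $p_x$ itself. (The paper invites this confusion: its definition $d_x=\min\{i:x^i=x\}$ gives the least period, while Theorem~\ref{theorem:irreps-symm} uses $\mathbb{Z}_{d_x}$ as the stabilizer, which has order $r_k/p_x$; only the stabilizer reading makes the traversal, and hence the recursion, correct.) With the correct count one gets
\begin{equation*}
M(\vec{r}|_k)\;=\;\sum_{x\in J}\frac{r_k}{p_x}\;=\;\sum_{p\mid r_k}\frac{g(p)}{p}\cdot\frac{r_k}{p}\;=\;\frac{1}{r_k}\sum_{d\mid r_k}f(d)\,d^2,
\end{equation*}
where $g(p)$ is the number of words of least period exactly $p$, the middle step uses that there are $g(p)/p$ orbits of size $p$, and the last step is the substitution $d=r_k/p$ with $f(d):=g(r_k/d)$, so that $f(d)$ counts words whose stabilizer has order $d$. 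Your quantity $\sum_{x\in J}d_x$ with $d_x$ the orbit size simply computes $h^{r_k}$, the total number of words; that is why your check returned $h^2$ rather than the correct $(h^2+3h)/2$ (for instance $\mathbb{Z}_2\wr\mathbb{Z}_2\cong D_4$ has $5=(4+6)/2$ irreducibles, not $4$). Your closing suggestion that ``the formula must be re-derived'' is backwards: the stated formula is fine, and it is your count of the $\tau$'s that needs fixing.

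The M\"obius step has the same reciprocal flipped. With $f(d)$ defined as the number of words of least period $d$ (your definition), the number of words of period dividing $c$ is $h^{c}$ (choose the repeating block of length $c$), not $h^{r_k/c}$, so inversion would give $f(d)=\sum_{e\mid d}\mu(d/e)h^{e}$, which is not the expression in the statement. The statement's $f(d)=\sum_{d\mid c\mid r_k}\mu(c/d)\,h^{r_k/c}$ is correct precisely for $f(d)=g(r_k/d)$: the number of words fixed by the subgroup of $\mathbb{Z}_{r_k}$ of order $c$ is $h^{r_k/c}$, and M\"obius inversion over the divisor lattice isolates those whose stabilizer has order exactly $d$. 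Once $d$ is consistently taken to be the stabilizer order rather than the period, both equalities fall out and your $r_k=2$ check passes: $f(1)=h^2-h$, $f(2)=h$, and $\tfrac12\bigl((h^2-h)\cdot 1+h\cdot 4\bigr)=(h^2+3h)/2$.
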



\section{Bijection between the branching diagram for generalized iterated wreath products and rooted trees}\label{section:branching-diagram-generalized-cyclic-rooted-tree-correspondence}
 
In this section, we will give a combinatorial structure describing the branching diagrams for the iterated wreath products of cyclic groups. 
In a similar spirit to Proposition 4.6 in \cite{MR2081042}, we have the following: 
\begin{proposition}\label{prop:one-to-one-equiv-irrep-orbits-labels}
There exists a one-to-one correspondence between equivalence classes of irreducible representations of 
the wreath product $W(\vec{r}|_k)$ of cyclic groups 
and orbits of compatible $\vec{r}|_k$-labels. 
\end{proposition}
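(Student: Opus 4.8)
The plan is to prove the bijection by induction on $k$, using Theorem~\ref{theorem:irreps-symm} to organize the correspondence recursively. The base case $k=1$ is immediate: the irreducible representations of $W(\vec{r}|_1) = \mathbb{Z}_{r_1}$ are indexed by $[r_1]$, which is exactly the set of compatible $\vec{r}|_1$-labels (a single root value in $[r_1]$), and the $W(\vec{r}|_1)$-action is trivial on both sides so orbits are singletons. For the inductive step I would fix a traversal $\mathcal{R} = \{\rho_1, \ldots, \rho_h\}$ for $W(\vec{r}|_{k-1})$ together with, by the inductive hypothesis, a bijection between $\mathcal{R}$ and the set of orbits of compatible $\vec{r}|_{k-1}$-labels; choose for each $i \in [h]$ a representative compatible label $\phi^{(i)}$ with $\rho_i \leftrightarrow [\phi^{(i)}]$.

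Next I would set up the correspondence at level $k$. Given a compatible $\vec{r}|_k$-label $\phi$ on $T(\vec{r}|_k)$, its restrictions $\phi_{v_1}, \ldots, \phi_{v_{r_k}}$ to the $r_k$ maximal subtrees are compatible $\vec{r}|_{k-1}$-labels, hence determine a word $x = x_1 \cdots x_{r_k} \in [h]^{r_k}$ by recording which orbit each $\phi_{v_j}$ lies in; and the root value $\phi(\text{root})$ lies in $[d]$, where $\mathbb{Z}_d$ is the stabilizer under the $\mathbb{Z}_{r_k}$-action on the tuple of orbit-classes — but that stabilizer is precisely $\mathbb{Z}_{d_x}$ in the notation of Theorem~\ref{theorem:irreps-symm}, so $\phi(\text{root})$ picks out a character $\tau \in \widehat{\mathbb{Z}_{d_x}}$. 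Thus a compatible label $\phi$ yields the pair $(x, \tau)$ and hence the representation $\Ind(\rho_{x_1} \otimes \cdots \otimes \rho_{x_{r_k}} \otimes \tau)$ appearing in the traversal of Theorem~\ref{theorem:irreps-symm}. I would then check this descends to a well-defined map on equivalence classes, and construct the inverse: from a traversal element, reconstruct $x$ and $\tau$, then build $\phi$ by placing the representative labels $\phi^{(x_j)}$ on the maximal subtrees and $\tau$ (viewed as an element of $[d_x]$) at the root — verifying compatibility from the definition.

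The main obstacle — and the step requiring the most care — is matching the \emph{equivalence relations} on the two sides, i.e.\ showing the map is a bijection on orbits rather than merely on objects. Concretely: two compatible labels $\phi, \psi$ of $T(\vec{r}|_k)$ are $W(\vec{r}|_k)$-equivalent if and only if the pairs $(x, \tau)$ and $(x', \tau')$ they produce give the same traversal element of $\mathcal{R}_{W(\vec{r}|_k)}$. Unwinding the semidirect product $W(\vec{r}|_k) = W(\vec{r}|_{k-1})^{r_k} \rtimes \mathbb{Z}_{r_k}$, an equivalence between $\phi$ and $\psi$ consists of a cyclic shift in $\mathbb{Z}_{r_k}$ permuting the maximal subtrees together with elements of $W(\vec{r}|_{k-1})$ acting within each subtree; the $W(\vec{r}|_{k-1})$-part is absorbed by the inductive hypothesis (equivalence of the $\phi_{v_j}$ as $\vec{r}|_{k-1}$-labels corresponds to equality in $[h]$), while the $\mathbb{Z}_{r_k}$-part corresponds exactly to the choice of orbit representative $x \in J$ in Theorem~\ref{theorem:irreps-symm}, and the residual freedom in the root value corresponds to the $\mathbb{Z}_{d_x}$-action identifying the relevant characters $\tau$. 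Here I would lean directly on the last two sentences of the proof of Theorem~\ref{theorem:irreps-symm} (that distinct orbit representatives with the same inertia group are conjugate, and the description of $\widehat{W}(\vec{r}|_k)(\sigma)$) to close the argument. Finally, a clean bookkeeping alternative to avoid chasing the equivalence both ways is to prove surjectivity and injectivity separately and then invoke a counting check: both sides are finite, and the count of orbits of compatible labels satisfies the same recursion $M(\vec{r}|_k)$ as the number of irreducibles, so a well-defined surjection is automatically a bijection.
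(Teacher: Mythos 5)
Your proposal is correct and follows essentially the same route as the paper: induction on $k$, with the base case $\widehat{W}(\vec{r}|_1)=\widehat{\mathbb{Z}_{r_1}}\leftrightarrow[r_1]$ and the inductive step matching the traversal of Theorem~\ref{theorem:irreps-symm} (subtree labels $\leftrightarrow$ the $\rho_{x_i}$, root value $\leftrightarrow$ $\tau\in\widehat{\mathbb{Z}_{d_x}}$) to compatible labels. If anything, your discussion of why the two equivalence relations match under the semidirect product decomposition is more explicit than the paper's, which simply asserts that the bijection on orbits follows by induction.
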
 

\begin{proof}
We inductively define a map $F: \widehat{W}(\vec{r}|_k) \rightarrow \{ \vec{r}|_k \text{-labels}\}$ that gives a bijection between equivalence classes of irreducible representations of $W(\vec{r}|_k)$ and orbits of labels. 
Denote by $z_1$ a fixed generator of $\mathbb{Z}_{r_1}$ and $w_1$ a fixed $r_1$-th root of unity. For $i=1 , \ldots , r_1$, let $\tau^{(1)}_i$ denote the irreducible representation of $\mathbb{Z}_{r_1}$ such that $\tau(z_1) = (w_1)^j$. 

Recall that  $T(\vec{r}|_1)$ consists of only a root node, and all $\vec{r}|_1$-labels $\phi: V_{T(\vec{r}|_1)} \rightarrow [r_1]$ are determined exactly by their value on the root node. Thus, $\widehat{W}(\vec{r}|_1) = \{ \tau^{(1)}_j : j \in [r_1] \}$ is clearly in bijection with $\vec{r}|_1$-labels. To be precise, $F: \widehat{W}(\vec{r}|_1) = \widehat{\mathbb{Z}_{r_1}} \rightarrow \{ \vec{r}|_1 \text{-labels} \}$ is defined by $F(\tau^{(1)}_j)(\text{root}) = j$ for all $j \in [r_1]$. 

Suppose for induction that $F: \widehat{W}(\vec{r}|_{k-1}) \rightarrow \vec{r}|_{k-1} \text{-labels}$ gives a bijection of the form desired. We define $F: \widehat{W}(\vec{r}|_{k}) \rightarrow \{ \vec{r}|_{k} \text{-labels} \}$ as follows. 

 Let $\{ \rho_1 , \ldots , \rho_h \}$ be an enumeration for a traversal $\mathcal{R}_{W(\vec{r}|_{k-1})}$ for $W(\vec{r}|_{k-1})$. It suffices to define $F$ on a traversal for $\widehat{W}(\vec{r}|_k)$, such as $\mathcal{R}_{W(\vec{r}|_k)}$ defined above. Then, let $\rho_{x_1} \otimes \cdots \otimes \rho_{x_{r_k}} \otimes \tau $ be an arbitrary element of $\mathcal{R}_{W(\vec{r}|_k)}$. Let 
 $$
 \phi := F\left(\Ind_{W(\vec{r}|_{k-1}) \wr \mathbb{Z}_{d_x}}^{W(\vec{r}|_k)} \rho_{x_1} \otimes \cdots \otimes \rho_{x_{r_k}} \otimes \tau  \right) : V_{T(\vec{r}|_k)} \rightarrow \mathbb{N}
 $$ 
 be defined as follows: 
let $U = \{ u_1, \ldots , u_{r_k} \}$ be the set of $r_k$ children of the root node. 
For any $u\in U$, let $\phi_u$ denote the vector $\vec{r}|_{k-1}$ found by restricting $\phi$ to the maximal subtree $T_u$. Let $d = d_x$ be as defined in \eqref{eqn:d_x}. 
Let $\zeta$ be a fixed generator of $\mathbb{Z}_d$ and $\omega$ a fixed $d$-th root of unity. For $i = 1 , \ldots, d$, let $\pi_i$ denote the irreducible representation of $\mathbb{Z}_d$ such that $\pi(\zeta) = \omega^i$. 

 \begin{itemize}
 \item For any non-root node of $T(\vec{r}|_k)$ we let the value of $\phi$ be defined to satisfy $\phi_{u_i} := F(\rho_{x_i})$. 
 \item For the root node, notice that $\tau \in \widehat{\mathbb{Z}_{d_x}}$ by definition of $\mathcal{R}_{W(\vec{r}|k)}$. But $d_x$ is exactly the integer such that $\mathbb{Z}_{d_x}$ is the stabilizer of $\mathbb{Z}_{r_k}$ on the equivalence classes of $\{ \phi_u : u\in U\}$, so $\tau \in  \widehat{\mathbb{Z}_{d}}$. Thus $\tau = \pi_j$ for some $j \in [d_x]$. Let $\phi (\text{root}) := j$. 
 \end{itemize}
 
It follows from induction that $F$ is a bijection from the equivalence classes of $\widehat{W}(\vec{r}|_k)$ to the orbits of $\vec{r}|_k$ labels. 

\end{proof}

\begin{corollary}\label{cor:number-trees-height-k-recursion-Euler}
The number $h_k(\vec{r}|_k)$ of $\vec{r}|_k$-trees of height $k$ is given by the recursion 
\[  
h_k(\vec{r}|_k) = \dfrac{1}{r_k}\sum_{d|c|r_k} \mu(c/d)h_{k-1}(\vec{r}|_{k-1})^{r_k/c} d^2,  
\]  
where $\mu(n)$ is the Euler number of $n \in \mathbb{N}$. 
\end{corollary}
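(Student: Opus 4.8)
The plan is to obtain this as an immediate consequence of the preceding Proposition together with the recursion already proved for the number of irreducible representations. By definition an $\vec{r}|_k$-tree of height $k$ is an equivalence class of compatible $\vec{r}|_k$-labels on $T(\vec{r}|_k)$ under the $W(\vec{r}|_k)$-action, so $h_k(\vec{r}|_k)$ is exactly the number of orbits of compatible $\vec{r}|_k$-labels. The preceding Proposition provides a bijection (the map $F$) from the equivalence classes in $\widehat{W}(\vec{r}|_k)$ onto precisely this set of orbits; hence $h_k(\vec{r}|_k) = M(\vec{r}|_k)$ for every $k \ge 1$.

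Granting this identification, the recursion follows by substitution. For the base case, $T(\vec{r}|_1)$ is a single root node, a compatible $\vec{r}|_1$-label is just a choice of value in $[r_1]$ at that node, and $W(\vec{r}|_1) = \mathbb{Z}_{r_1}$ fixes the root, so the $r_1$ labels lie in $r_1$ distinct orbits and $h_1(\vec{r}|_1) = r_1 = M(\vec{r}|_1)$, which is the initial value of the recursion. For the remaining terms one plugs $h_k(\vec{r}|_k) = M(\vec{r}|_k)$ and $h_{k-1}(\vec{r}|_{k-1}) = M(\vec{r}|_{k-1})$ into
\[
M(\vec{r}|_k) = \frac{1}{r_k}\sum_{d \mid c \mid r_k}\mu(c/d)\, M(\vec{r}|_{k-1})^{r_k/c}\, d^2
\]
from Subsection~\ref{subsection:number-of-irrep}, recovering the stated formula for $h_k(\vec{r}|_k)$ verbatim.

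The main (and quite modest) difficulty is purely definitional bookkeeping: one must be sure that ``$\vec{r}|_k$-tree'' really means ``orbit of a compatible $\vec{r}|_k$-label'' and that the Proposition's correspondence is literally between these orbits and the equivalence classes of irreducibles, both of which hold by the explicit construction of $F$. If one wanted a proof bypassing representation theory, the same recursion can be obtained combinatorially: apply Burnside's lemma to the $\mathbb{Z}_{r_k}$-action on $[h_{k-1}(\vec{r}|_{k-1})]^{r_k}$, note that an orbit of words whose stabilizer has order $d$ extends in exactly $d$ ways to a root label and that distinct root labels yield inequivalent $\vec{r}|_k$-trees (the root is fixed by $W(\vec{r}|_k)$, so its label is an invariant of the orbit), so that the total count is $\frac{1}{r_k}\sum_{d \mid r_k} f(d)\, d^2$ where $f(d)$ counts the length-$r_k$ words with stabilizer of order exactly $d$, and finish by Möbius-inverting $\sum_{d \mid c \mid r_k} f(c) = h_{k-1}(\vec{r}|_{k-1})^{r_k/d}$. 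This re-derives the formula directly, but it essentially reproves the theorem on $M(\vec{r}|_k)$, so the short route through the Proposition is the one I would write up.
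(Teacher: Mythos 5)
Your proposal is correct and follows exactly the route the paper intends: the corollary is an immediate consequence of the preceding Proposition (which identifies orbits of compatible $\vec{r}|_k$-labels with equivalence classes of irreducibles of $W(\vec{r}|_k)$, so $h_k(\vec{r}|_k)=M(\vec{r}|_k)$) combined with the recursion for $M(\vec{r}|_k)$ from Subsection~\ref{subsection:number-of-irrep}. Your remark that one must read ``$\vec{r}|_k$-tree'' as ``orbit of a compatible $\vec{r}|_k$-label'' is the right definitional clarification, and the alternative Burnside argument, while sound, is indeed redundant with the proof of the $M(\vec{r}|_k)$ recursion.
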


\subsection{Degrees of irreducible representations}\label{section:degrees}

Following the discussion in Section 4.1.1 in \cite{MR2081042}, we define for any $\vec{r}|_k$-label $\phi$ the companion tree $C_\phi$. 

\begin{definition}
Fix an $\vec{r}|_k$-label $\phi$. Define the {\em companion label} $C_\phi$ to be the $\vec{r}|_k$-label $C: V_{T(\vec{r}|_k)} \rightarrow \mathbb{N}$ as follows: an arbitrary vertex $v$ on the $\ell$-th layer of the complete $\vec{r}$-tree $T(\vec{r}|_k)$ is 

\begin{equation*}
C(v) = \left |\left\{  
\phi_u^{ W(\vec{r}|_{k-\ell-1}) } : u \text{ is a child of } v 
\right\} \right|. 
\end{equation*} 

\end{definition}

Recall that $x^G=\{x^g:g\in G \}$, the orbit of $x$ under the action of $G$. So $C(v)$ is the number of orbits occupied by the $\vec{r}|_{k-\ell-1}$-labels of the $r_{k-\ell-1}$ maximal $\vec{r}|_{k-\ell-1}$-subtrees of $T_v$, or the number of inequivalent sublabels on maximal subtrees of $T_v$ given by $\phi$.

\begin{example}
The companion tree $C_{\phi}$ to Example~\ref{example:complete-tree-3-layers-nodes} is 
\tiny 
 \[
\xymatrix@-1pc{
& & & & & & & \ar@{-}[llllldd]  \ar@{-}[ldd]\stackrel{r_3}{\bigcdot} \ar@{-}[rrrrdd]& &  &  & & & &  \\ 
& & & & & & & &  & & & & & & \\ 
& & \ar@{-}[lldd] \ar@{-}[ldd] \stackrel{2}{\bigcdot}\ar@{-}[rdd] & & & &\ar@{-}[lldd] \ar@{-}[ldd]\stackrel{1}{\bigcdot} \ar@{-}[rdd]& & & \cdots&    &  \ar@{-}[lldd] \ar@{-}[ldd]  \stackrel{r_3}{\bigcdot} \ar@{-}[rdd] & & &  \\ 
& &  & & & & &  & & & & & & & \\
\stackrel{1}{\bigcdot} & \stackrel{1}{\bigcdot} & \cdots &\stackrel{1}{\bigcdot} &  \stackrel{1}{\bigcdot}&  \stackrel{1}{\bigcdot} & \cdots & \stackrel{1}{\bigcdot}&   & \stackrel{1}{\bigcdot} & \stackrel{1}{\bigcdot} & \cdots &\stackrel{1.}{\bigcdot}  & &  &\\
}
\]
\end{example}

Similar to Proposition 4.3 in \cite{MR2081042}, we obtain: 
\begin{proposition}\label{prop:dimension-rep-companion-tree}

Let $\rho$ be an irreducible representation of $W(\vec{r}|_k)$ associated to $\vec{r}|_k$-tree $T$ with companion tree $C_T$. Then the dimension $d_\rho$ of $\rho$ is given by 

        \begin{equation}
        d_\rho = \prod_{v} C(v), 
        \end{equation}
the product of the value of the companion label $C$ on all vertices. 
\end{proposition}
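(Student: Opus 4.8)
The plan is to prove the dimension formula by induction on the height $k$, running the induction alongside the inductive construction of the bijection $F$ from the previous proposition, and I identify throughout the $\vec{r}|_k$-tree $T$ with the compatible label $\phi = F(\rho)$ it carries, so that $C_T = C_\phi$ and $\prod_v C(v) = \prod_v C_\phi(v)$. In the base case $k = 1$ the group $W(\vec{r}|_1) = \mathbb{Z}_{r_1}$ is abelian, hence $d_\rho = 1$ for every irreducible $\rho$, while $T(\vec{r}|_1)$ consists of a single root node with no children, so $\prod_v C(v)$ is an empty product equal to $1$; more generally every leaf $v$ contributes $C(v) = 1$, so only internal nodes matter in what follows.

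For the inductive step, by Theorem~\ref{theorem:irreps-symm} it is enough to check the formula on the traversal $\mathcal{R}_{W(\vec{r}|_k)}$, so I fix a member $\rho = \Ind_{I}^{W(\vec{r}|_k)}(\rho_{x_1}\otimes\cdots\otimes\rho_{x_{r_k}}\otimes\tau)$ of it, where $x\in J$, $\tau\in\widehat{\mathbb{Z}}_{d_x}$, and $I \supseteq W(\vec{r}|_{k-1})^{r_k}$ is the inertia group, and I let $\phi = F(\rho)$, whose restrictions $\phi_{u_1},\dots,\phi_{u_{r_k}}$ to the maximal subtrees at the children $u_1,\dots,u_{r_k}$ of the root satisfy $\phi_{u_i} = F(\rho_{x_i})$ by construction of $F$. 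The first step is a dimension count: using $\dim \Ind_H^G\sigma = [G:H]\dim\sigma$ and the fact that $\tau$, being a character of the cyclic group $I/W(\vec{r}|_{k-1})^{r_k}$, has $\dim\tau = 1$, I obtain $d_\rho = [W(\vec{r}|_k):I]\cdot\prod_{i=1}^{r_k}\dim\rho_{x_i}$. Since $|W(\vec{r}|_k)| = |W(\vec{r}|_{k-1})|^{r_k}\cdot r_k$ and $I/W(\vec{r}|_{k-1})^{r_k}$ is the stabilizer of the word $x$ under the cyclic $\mathbb{Z}_{r_k}$-action, counting orbit and stabilizer sizes gives $[W(\vec{r}|_k):I] = |x^{\mathbb{Z}_{r_k}}| = d_x$.

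The second step deals with the remaining factors. The induction hypothesis applied to each $\rho_{x_i}$ gives $\dim\rho_{x_i} = \prod_{w\in V_{T_{u_i}}} C_{\phi_{u_i}}(w)$, and the companion label is \emph{local}: for a vertex $v$ lying in the maximal subtree $T_{u_i}$, both the layer of $v$ and the height of the subtree $T_v$ decrease by one in passing from $T(\vec{r}|_k)$ to $T_{u_i}\cong T(\vec{r}|_{k-1})$, so the group entering the definition of $C$ at $v$ is unchanged, and the children of $v$ together with the classes of the sublabels below them are the same on either side; hence $C_{\phi_{u_i}}(v) = C_\phi(v)$. As the non-root vertices of $T(\vec{r}|_k)$ are the disjoint union $\bigsqcup_{i=1}^{r_k} V_{T_{u_i}}$, multiplying over $i$ yields $\prod_{i=1}^{r_k}\dim\rho_{x_i} = \prod_{v\neq\mathrm{root}} C_\phi(v)$.

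It then remains to identify the leftover factor $[W(\vec{r}|_k):I] = d_x$ with $C_\phi(\mathrm{root})$, and this is the step I expect to be the main obstacle. Because $F$ is injective on the isomorphism classes $\rho_1,\dots,\rho_h$, the sublabels $\phi_{u_i}$ and $\phi_{u_j}$ are $W(\vec{r}|_{k-1})$-equivalent exactly when $x_i = x_j$, so the cyclic $\mathbb{Z}_{r_k}$-action on the tuple of classes of $(\phi_{u_1},\dots,\phi_{u_{r_k}})$ has the same orbit and stabilizer as its action on the word $x$; by the definition of the companion label $C_\phi(\mathrm{root})$ is precisely the size of that orbit, namely $|x^{\mathbb{Z}_{r_k}}| = d_x = [W(\vec{r}|_k):I]$. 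Combining the three steps, $d_\rho = C_\phi(\mathrm{root})\cdot\prod_{v\neq\mathrm{root}} C_\phi(v) = \prod_v C(v)$, which completes the induction. The delicate points are the order- and stabilizer-bookkeeping that makes the group index $[W(\vec{r}|_k):I]$ literally equal the combinatorially defined $C_\phi(\mathrm{root})$, and confirming that the layer convention in the definition of the companion label is compatible with restriction to maximal subtrees, which is what legitimizes the locality used in the second step.
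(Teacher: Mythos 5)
The paper states this proposition without proof, so there is nothing to compare against; I am assessing your argument on its own terms. Your overall strategy is sound and most of it goes through: the induction on $k$ alongside the bijection $F$, the identity $d_\rho=[W(\vec{r}|_k):I]\cdot\prod_i\dim\rho_{x_i}$ with $\dim\tau=1$, the orbit--stabilizer computation $[W(\vec{r}|_k):I]=|x^{\mathbb{Z}_{r_k}}|=d_x$ (which correctly resolves the paper's own ambiguity about whether the quotient $I/W(\vec{r}|_{k-1})^{r_k}$ has order $d_x$ or $r_k/d_x$), and the locality argument giving $\prod_i\dim\rho_{x_i}=\prod_{v\neq\mathrm{root}}C_\phi(v)$ are all correct, modulo the harmless convention that leaves contribute $1$ rather than $|\emptyset|=0$.

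The gap sits exactly where you predicted: the identification $C_\phi(\mathrm{root})=d_x$. By the paper's definition, $C_\phi(\mathrm{root})$ is the cardinality of the \emph{set} $\{\phi_u^{W(\vec{r}|_{k-1})} : u \text{ a child of the root}\}$, i.e.\ the number of \emph{inequivalent} sublabels, which under your dictionary is the number of distinct letters $|\{x_1,\ldots,x_{r_k}\}|$. What your computation requires, and what you assert follows ``by the definition,'' is instead the size $d_x$ of the cyclic orbit of the word $x$, i.e.\ its minimal period. These are genuinely different invariants. Concretely, take $\vec{r}=(2,3)$ and $x=112$: the induced representation of $\mathbb{Z}_2\wr\mathbb{Z}_3$ has dimension $d_x=3$ (consistent with $6\cdot 1^2+2\cdot 3^2=24=|\mathbb{Z}_2\wr\mathbb{Z}_3|$), while the root has only $2$ inequivalent children, so $\prod_v C(v)=2$. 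So either your final step is false, or---more likely---the paper's definition of the companion label is itself defective and should read ``the number of distinct cyclic rotations of the tuple of equivalence classes of the children's sublabels'' (equivalently, the index in $\mathbb{Z}_{r_{k-\ell}}$ of the stabilizer of that tuple); with that corrected definition your argument is complete. As written, however, the equality $C_\phi(\mathrm{root})=d_x$ does not follow from the stated definition and fails in general, so you must either adopt and state the corrected definition or explicitly flag the discrepancy; you cannot simply cite ``the definition of the companion label'' at this step.
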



\section{Fast Fourier transforms, adapted bases and upper bound estimates}\label{section:FFT}   

We will now prove Theorem~\ref{theorem:TG-upper-bound}.

\begin{proof}
Enumerate $\mathcal{R}_K = \{ \eta_1, \ldots, \eta_L \}$ so that $K$ has $L$ inequivalent irreducible representations. Notice that 
$$
\mathcal{R}_{K^r} = \{ \eta_{\ell_1} \otimes \cdots \otimes \eta_{\ell_r} : \ell_1, \ldots , \ell_r \in [L] \} = \{ \eta_{\vec{\ell}} : \vec{\ell} \in [L]^r\}.
$$ 

Notice that $\{(1, i): i \in [r]\}$ is a complete set of coset representatives of $K^r$ in $K \wr \mathbb{Z}_r = K^r \rtimes \mathbb{Z}_r$. Let $f_i:K^r \rightarrow \mathbb{C}$ be defined by $f_i(\vec{k}) = f(\vec{k}, i)$. 
Calculating $ \{ \widehat{f_i}(\eta_{\vec{\ell}}) : i \in [r], \vec{\ell} \in [L]^r\}$ requires $r \cdot T(K^r)$ computations. We will use the $\widehat{f_i}$'s to compute the Fourier transform $\widehat{f}$. 

It suffices to compute $\widehat{f}$ for every induced irreducible representation of maximal extensions for all irreducible representations of $K^r$. 
So fix some $\vec{\eta} = \eta_{\vec{\ell}} \in \mathcal{R}_{K^r}$   
corresponding to some fixed $\vec{\ell} \in [L]^r$. Let $m = m_{\vec{\ell}} := \min \{ i\neq 0 : \vec{\ell}^i = \vec{\ell} \}$, where $(\ell_1 \ell_2 \cdots \ell_r)^i := \ell_{i+1} \cdots \ell_r \ell_1 \cdots \ell_i$. Identify $K \rtimes \mathbb{Z}/r\mathbb{Z}$ with $K \rtimes (\mathbb{Z}/ m\mathbb{Z} \times m\mathbb{Z}/r\mathbb{Z})$ to relabel the element $(\vec{k}, i)$ by $(\vec{k}, i_1, i_2) = (\vec{k}, i\mod m, \lfloor i/m \rfloor)$.  

The inertia group of $\vec{\eta}$ is given by $H = K^r \rtimes \mathbb{Z}_{r/m} =  K^r \rtimes ( 1 \times m\mathbb{Z}/ r\mathbb{Z} )$. Fix some $(r/m)$-th root of unity $\zeta$. Denote the irreducible representations of $H/K^r \cong \mathbb{Z}_{r/m}$ by $\chi_j$ for $j\in [r/m]$, where $\chi_j(i) = \zeta^{ij}$. Let $\widetilde{\chi}: H \rightarrow \mathbb{C}$ be given by $\widetilde{\chi}(\vec{k}, i_2) = \chi(i_2)$. If $\widetilde{\eta}$ is one extension of $\vec{\eta}$ to $H$, then the set $\{ \widetilde{\chi_j} \otimes \widetilde{\eta} : j \in [r/m] \}$ gives the complete set of inequivalent extensions of $\vec{\eta}$ to $H$. All these extensions are irreducible. 

Similarly, we relabel $\{ f_i : K^r \rightarrow \mathbb{C} : i \in [r] \}$ by $\{ f_{i_1,i_2} : K^r \rightarrow \mathbb{C} : i_1 \in [m], i_2 \in [r/m] \}$ where $ f_{i_1, i_2} (\vec{k}) = f( \vec{k}, i_1, i_2) $. For all $i_1 \in [m]$, let $f_{i_1} : H \rightarrow \mathbb{C}$ be given by $f_{i_1}(\vec{k}, i_2) = f(\vec{k}, i_1, i_2)$. 
For all $i_1$, we have 

\begin{eqnarray*}
\widehat{f}_{i_1} ( \widetilde{\chi} \otimes \widetilde{\eta}) & = & \sum\limits_{\vec{k} \in K^r, i_2 \in [r/m]} \widetilde{\chi}(i_2) \cdot \widetilde{\eta}(\vec{k}, i_2) \cdot f_{i_1}(\vec{k},  i_2) \\ 
& = & \sum\limits_{\vec{k} \in K^r, i_2 \in [r/m]}  \widetilde{\chi}(i_2) \cdot  \widetilde{\eta}\left( (\vec{1}, i_2) (\vec{k}, 1) \right) f_{i_1, i_2} (\vec{k}) \\ 
& = & \sum\limits_{i_2 \in [r/m]}  \widetilde{\chi}(i_2) \widetilde{\eta}(\vec{1}, i_2) \sum\limits_{\vec{k} \in K^r} \eta(\vec{k}) f_{i_1, i_2} (\vec{k}) \\ 
& = & \sum\limits_{i_2 \in [r/m]} \widetilde{\chi}(i_2) \widetilde{\eta}(\vec{1}, i_2) \widehat{f}_{i_1, i_2} (\eta). 
\end{eqnarray*}

Notice that since $\widetilde{\eta}$ is an extension of $\eta$, $\widetilde{\eta}(\vec{k}, 1) = \eta(\vec{k})$ for any $\vec{k} \in K^r$. Let $\alpha_{i_1}(i_2) = \widetilde{\eta}(\vec{1}, i_2) \widehat{f}_{i_1, i_2} (\eta)$. Since $\widehat{f}_{i_1, i_2} (\eta)$ and $\widetilde{\eta}$ are $\dim(\eta) \times \dim(\eta)$ matrices, finding $\alpha_{i_1}(i_2)$ for all $i_1,i_2$ requires $m \cdot r/m \cdot \dim(\eta)^\alpha$ computations, where $\alpha$ is the constant of matrix multiplication. Further, the $(j,k)$-th entry of the resulting matrix of the summation is the Fourier transform of $\alpha_{jk}$ at the irreducible $\chi$. As the associated group is $\mathbb{Z}_{r/m}$, Fourier transforms require time $O(\frac{r}{m} \log \frac{r}{m})$. There are $\dim(\eta)^2$ of these functions, so computation of the final matrix for all $i_1$ requires 
$$
m \cdot \dim(\eta)^2 \cdot O\left(\frac{r}{m} \log \frac{r}{m}\right) 
= \dim(\eta)^2 \cdot O\left(r \log \frac{r}{m}\right)
$$ 
computations.

Now, we are interested in computing the matrix value of the transform on the induced representation $\rho : = \Ind_H^G \widetilde{\chi} \otimes \widetilde{\eta}$ for a specific $\widetilde{\chi} \otimes \widetilde{\eta}$. However, $\rho(\vec{k}, i_1, 1)$ is a block diagonal matrix with entries $\left( \widetilde{\chi} \otimes \widetilde{\eta} \right)^{(j)}$ in the $j$-th place, where the $j$ represents the irreducible found by conjugating by $(\vec{1}, 1, j)$. Using this, we obtain:   

\begin{eqnarray*}
	 \widehat{f}(\rho) & =  & \sum\limits_{\vec{k}, i_1, i_2} \rho(\vec{k}, i_1, i_2) \cdot f_{i_1, i_2}(\vec{k}) \\
	 & = & \sum\limits_{i_2} \rho(\vec{1}, 1, i_2) 
	 \sum\limits_{\vec{k}, i_1}  
	 \left( \begin{array}{ccc}
	 (\widetilde{\chi} \otimes \widetilde{\eta})^{(1)} (\vec{k}, i_1) & \cdots &  0 \\ 
	 \vdots & \ddots & \vdots  \\ 
	 0 & \cdots & (\widetilde{\chi} \otimes \widetilde{\eta})^{(m)}(\vec{k}, i_1)
	 \end{array}  \right) f_{i_1, i_2} (\vec{k})  \\ 
& = & \sum\limits_{i_2} \rho(\vec{1}, 1, i_2) 
	\left( \begin{array}{ccc}
	\widehat{f}_{i_2}(\widetilde{\chi} \otimes \widetilde{\eta}^{(1)} ) & \cdots & 0 \\ 
	\vdots & \ddots  & \vdots \\ 
	0 & \cdots & \widehat{f}_{i_2}(\widetilde{\chi} \otimes \widetilde{\eta}^{(m)} )
	\end{array} 
	\right).
\end{eqnarray*}	
With $\dim(\eta)^\alpha \cdot m^2$ computations for each induced representation and a total of $\frac{r}{m}$ induced representations, 
we have 
$r \cdot m \dim(\eta)^\alpha$ computations for each orbit of irreducible representations of $K^r$ under conjugation by $G$. 

In total, we need
	\begin{equation}
	r \cdot T(K^r) +	\sum\limits_{\eta \in E} 	m \left( r \dim(\eta)^\alpha + \dim(\eta)^2 O\left(r \log \frac{r}{m}\right) \right) + r\: m \cdot \dim(\eta)^\alpha  
	\end{equation}
computations, where $E$ is a set of representatives for each $G$-orbit of irreducible representations of $K^r$. 
The size of $E$ is given simply by the number of orbits of $[L]^r$ under action by $\mathbb{Z}_r$. 
\end{proof}


\section{Conclusion and future direction}\label{section:conclusion-future}
We have given an explicit description of a traversal for the iterated wreath product $W(\vec{r}|_k)$ of cyclic groups in Theorem~\ref{theorem:irreps-symm}, and we have determined a tighter upper bound of the FFT computation time for the iterated wreath product in Theorem~\ref{theorem:TG-upper-bound}. 

In our sequel manuscript \cite{Branching-diag-symm-Im-Wu}, we examine the representation theory of generalized iterated wreath products of symmetric groups, where we give a complete description of the traversal for these families of generalized iterated wreath products, and show the existence of a bijection between equivalence classes of irreducible representations of the generalized iterated wreath product and orbits of labels on certain rooted trees.  

We conclude with an open problem, which is to find adapted bases and fast Fourier transform operation bounds for chains of subgroups of iterated wreath products of more general classes of groups.

\appendix

\newcommand{\etalchar}[1]{$^{#1}$}
\def\cprime{$'$} \def\cprime{$'$} \def\cprime{$'$} \def\cprime{$'$}
\providecommand{\bysame}{\leavevmode\hbox to3em{\hrulefill}\thinspace}
\providecommand{\MR}{\relax\ifhmode\unskip\space\fi MR }
\providecommand{\MRhref}[2]{%
  \href{http://www.ams.org/mathscinet-getitem?mr=#1}{#2}
}
\providecommand{\href}[2]{#2}


\providecommand{\bysame}{\leavevmode\hbox to3em{\hrulefill}\thinspace}
\providecommand{\MR}{\relax\ifhmode\unskip\space\fi MR }
\providecommand{\MRhref}[2]{%
  \href{http://www.ams.org/mathscinet-getitem?mr=#1}{#2}
}
\providecommand{\href}[2]{#2}
\begin{thebibliography}{}

\end{thebibliography}


\begin{thebibliography}{FHM{\etalchar{+}}99b}

\bibitem {Stankovic-Moraga-Astola}
Jaakko~T. Astola, Claudio Moraga, and Radomir~S. Stankovi\'c, \emph{Fourier
  analysis on finite groups with applications in signal processing and system
  design}, John Wiley \& Sons, Inc (2005).

\bibitem {balasubramanian1979enumeration}
K~Balasubramanian, \emph{Enumeration of internal rotation reactions and their
  reaction graphs}, Theoretica chimica acta \textbf{53} (1979), no.~2,
  129--146.

\bibitem {MR585739}
K.~Balasubramanian, \emph{Graph theoretical characterization of {NMR} groups,
  nonrigid nuclear spin species and the construction of symmetry adapted {NMR}
  spin functions}, J. Chem. Phys. \textbf{73} (1980), no.~7, 3321--3337.

\bibitem {borsa2015wreath}
Diana Borsa, Thore Graepel, and Andrew Gordon, \emph{The wreath process: A
  totally generative model of geometric shape based on nested symmetries},
  arXiv preprint arXiv:1506.03041 (2015).

\bibitem {Chang-Thesis}
Will Chang, \emph{Image processing with wreath product groups}, 
  \url{https://www.math.hmc.edu/seniorthesis/archives/2004/wchang/wchang-2004-thesis.pdf},
  2004.

\bibitem {MR0202859}
A.~J. Coleman, \emph{Induced representations with applications to {$S_{n}$} and
  {${\rm GL}(n)$}}, Lecture notes prepared by C. J. Bradley. Queen's Papers in
  Pure and Applied Mathematics, No. 4, Queen's University, Kingston, Ont.,
  1966.

\bibitem {MR1038525}
Charles~W. Curtis and Irving Reiner, \emph{Methods of representation theory.
  {V}ol. {I}}, Wiley Classics Library, John Wiley \& Sons, Inc., New York,
  1990, With applications to finite groups and orders, Reprint of the 1981
  original, A Wiley-Interscience Publication.

\bibitem {MR2760311}
T.~Ceccherini-Silberstein, F.~Scarabotti, and F.~Tolli, \emph{Clifford theory
  and applications}, J. Math. Sci. (N.Y.) \textbf{156} (2009), no.~1, 29--43,
  Functional analysis.

\bibitem {MR0178586}
James~W. Cooley and John~W. Tukey, \emph{An algorithm for the machine
  calculation of complex {F}ourier series}, Math. Comp. \textbf{19} (1965),
  297--301.

\bibitem {diaconis1980average}
Persi Diaconis, \emph{Average running time of the fast {F}ourier transform},
  Journal of Algorithms \textbf{1} (1980), no.~2, 187--208.

\bibitem {Mirchandani99multiresolution-analysis}
R.~Foote, D.~Healy, G.~Mirchandani, T.~Olson, and D.~Rockmore, \emph{A wreath
  product group approach to signal and image processing: Part {I} -
  multiresolution analysis}, 1999.

\bibitem {Mirchandani99awreath}
\bysame, \emph{A wreath product group approach to signal and image processing:
  Part {II} - convolution, correlation, and applications}, 1999.

\bibitem {Holmes-mathematical-foundations}
Richard~B. Holmes, \emph{Mathematical foundations of signal processing {II}.
  the role of group theory}, MIT Lincoln Laboratory, Lexington, MA
  \textbf{Technical Report 781} (1987), 1--97.

\bibitem {Holmes-signal-processing}
\bysame, \emph{Signal processing on finite groups}, MIT Lincoln Laboratory,
  Lexington, MA \textbf{Technical Report 873} (1990), 1--38.
  
\bibitem{Branching-diag-symm-Im-Wu}
Mee~Seong Im and Angela Wu, \emph{Generalized iterated wreath products of
  symmetric groups and generalized rooted trees correspondence},  
  \url{https://arxiv.org/abs/1409.0604}, to appear in Adv. Math. Sci.   

\bibitem {MR1060103}
Gregory Karpilovsky, \emph{Clifford theory for group representations},
  North-Holland Mathematics Studies, vol. 156, North-Holland Publishing Co.,
  Amsterdam, 1989, Notas de Matem\'atica [Mathematical Notes], 125.

\bibitem {leyton2003generative}
Michael Leyton, \emph{A generative theory of shape}, vol. 2145, Springer, 2003.

\bibitem {milot2001energy}
Robin Milot, AW~Kleyn, and APJ Jansen, \emph{Energy dissipation and scattering
  angle distribution analysis of the classical trajectory calculations of
  methane scattering from a {N}i (111) surface}, The Journal of Chemical
  Physics \textbf{115} (2001), no.~8, 3888--3894.

\bibitem {maslen2001cooley}
David~K Maslen and Daniel~N Rockmore, \emph{The {C}ooley-{T}ukey {FFT} and
  group theory}, Notices of the AMS \textbf{48}, no.~10, 1151--1160.

\bibitem {MR2081042}
R.~C. Orellana, M.~E. Orrison, and D.~N. Rockmore, \emph{Rooted trees and
  iterated wreath products of cyclic groups}, Adv. in Appl. Math. \textbf{33}
  (2004), no.~3, 531--547.

\bibitem {Rader}
C.~M. Rader, \emph{Discrete {F}ourier transforms when the number of data
  samples is prime}, Proceedings of the IEEE \textbf{56} (1968), no.~6,
  1107--1108.

\bibitem {MR1053228}
Daniel Rockmore, \emph{Fast {F}ourier analysis for abelian group extensions},
  Adv. in Appl. Math. \textbf{11} (1990), no.~2, 164--204.

\bibitem {MR0243724}
Lawrence~R. Rabiner, Ronald~W. Schafer, and Charles~M. Rader, \emph{The chirp
  {$z$}-transform algorithm and its application}, Bell System Tech. J.
  \textbf{48} (1969), 1249--1292.

\bibitem {schnell2010understanding}
Melanie Schnell, \emph{Understanding high-resolution spectra of nonrigid
  molecules using group theory}, ChemPhysChem \textbf{11} (2010), no.~4,
  758--780.

\end{thebibliography}
\end{document}